\documentclass[a4paper,11pt,fleqn]{article}
\usepackage[obeyspaces,hyphens,spaces]{url}
\usepackage[dvipsnames]{xcolor}
\usepackage{amsmath}
\usepackage{amssymb}
\usepackage{amsfonts}
\usepackage{mathtools}
\usepackage{euscript}
\usepackage[scr=rsfs]{mathalfa}
\usepackage{pifont}     
\usepackage{srcltx}     
\usepackage{charter}
\usepackage{etoolbox}
\oddsidemargin -0.55cm
\textwidth   17cm 
\topmargin  -0.8cm
\headheight  0.0cm
\textheight  23.4cm
\parindent   4mm
\parskip     3pt
\tolerance   1000
\allowdisplaybreaks 
\definecolor{labelkey}{HTML}{0455BF}
\definecolor{refkey}{rgb}{0,0.6,0.0}
\definecolor{dblue}{HTML}{044EAF}
\definecolor{dgreen}{HTML}{02724A}
\definecolor{myellow}{HTML}{D97904}
\definecolor{dred}{HTML}{D90404}
\usepackage{upref}
\usepackage{hyperref}
\hypersetup{%
pdftitle={A Decomposition Method for Solving Multicommodity
Network Equilibrium},
pdfauthor={Minh N. B\`ui},
colorlinks=true,linkcolor=dblue,citecolor=dgreen,urlcolor=dred}
\renewcommand\familydefault{\rmdefault}
\DeclareMathAlphabet{\mathrm}{OT1}{\familydefault}{m}{n}
\makeatletter
\def\operator@font{\mathgroup\symoperators\rm}
\makeatother
\DeclareFontFamily{U}{matha}{\hyphenchar\font45}
\DeclareFontShape{U}{matha}{m}{n}{
<-6> matha5 <6-7> matha6 <7-8> matha7
<8-9> matha8 <9-10> matha9
<10-12> matha10 <12-> matha12
}{}
\DeclareSymbolFont{matha}{U}{matha}{m}{n}
\DeclareFontFamily{U}{mathx}{\hyphenchar\font45}
\DeclareFontShape{U}{mathx}{m}{n}{
<-6> mathx5 <6-7> mathx6 <7-8> mathx7
<8-9> mathx8 <9-10> mathx9
<10-12> mathx10 <12-> mathx12
}{}
\DeclareSymbolFont{mathx}{U}{mathx}{m}{n}
\DeclareMathDelimiter{\vvvert}{0}{matha}{"7E}{mathx}{"17}%
\renewcommand{\leq}{\ensuremath{\leqslant}}
\renewcommand{\geq}{\ensuremath{\geqslant}}

\newcommand{\scal}[2]{\langle{{#1}\mid{#2}}\rangle}

\newcommand{\menge}[2]{\big\{{#1}~|~{#2}\big\}} 
\newcommand{\Menge}[2]{\left\{{#1}~\middle|~{#2}\right\}} 
\DeclareFontFamily{U}{BOONDOX-calo}{\skewchar\font=45}
\DeclareFontShape{U}{BOONDOX-calo}{m}{n}{
  <-> s*[1.05] BOONDOX-r-calo}{}
\DeclareFontShape{U}{BOONDOX-calo}{b}{n}{
  <-> s*[1.05] BOONDOX-b-calo}{}
\DeclareMathAlphabet{\mathcalboondox}{U}{BOONDOX-calo}{m}{n}
\SetMathAlphabet{\mathcalboondox}{bold}{U}{BOONDOX-calo}{b}{n}
\DeclareMathAlphabet{\mathbcalboondox}{U}{BOONDOX-calo}{b}{n}
\newcommand{\BN}{\mathcalboondox{N}}
\newcommand{\BA}{\mathcalboondox{A}}
\newcommand{\BCC}{\mathcalboondox{C}}

\newcommand{\XXX}{\ensuremath{\boldsymbol{\mathcal{X}}}}
\newcommand{\VVV}{\ensuremath{\boldsymbol{\mathcal{V}}}}
\newcommand{\HH}{\mathcal{H}}

\newcommand{\GG}{\mathcal{G}}

\newcommand{\RR}{\mathbb{R}}
\newcommand{\NN}{\mathbb{N}}
\newcommand{\fc}{\mathfrak{c}}

\newcommand{\pinf}{\ensuremath{{{+}\infty}}}

\newcommand{\RP}{\ensuremath{\left[0,{+}\infty\right[}}
\newcommand{\RPX}{\ensuremath{\left[0,{+}\infty\right]}}
\newcommand{\RX}{\ensuremath{\left]{-}\infty,{+}\infty\right]}}

\newcommand{\RPP}{\ensuremath{\left]0,{+}\infty\right[}}

\newcommand{\emp}{\varnothing}

\newcommand{\Id}{\mathrm{Id}}

\newcommand{\exi}{\ensuremath{\exists\,}}
\newcommand{\prox}{\mathrm{prox}}
\newcommand{\proj}{\mathrm{proj}}

\DeclareMathOperator{\card}{card}

\DeclareMathOperator{\dom}{dom}

\DeclareMathOperator{\gra}{gra}

\DeclareMathOperator{\dive}{div}
\newcommand*\mute{{\mkern 2mu\cdot\mkern 2mu}}


\def\abstract{\noindent{\bfseries Abstract}. \ignorespaces}

\usepackage{theorem}
\newtheorem{theorem}{Theorem}

\newtheorem{proposition}[theorem]{Proposition}

\theoremstyle{plain}{\theorembodyfont{\rmfamily}%
}

\theoremstyle{plain}{\theorembodyfont{\rmfamily}%
\newtheorem{example}[theorem]{Example}}
\theoremstyle{plain}{\theorembodyfont{\rmfamily}%
\newtheorem{remark}[theorem]{Remark}}
\theoremstyle{plain}{\theorembodyfont{\rmfamily}%
}
\theoremstyle{plain}{\theorembodyfont{\rmfamily}%
}
\theoremstyle{plain}{\theorembodyfont{\rmfamily}%
\newtheorem{definition}[theorem]{Definition}}
\theoremstyle{plain}{\theorembodyfont{\rmfamily}%
\newtheorem{problem}[theorem]{Problem}}
\usepackage{enumitem}
\setlist[enumerate]{itemsep=-1pt}
\setlist[itemize]{itemsep=-1pt}

\numberwithin{equation}{section}
\usepackage{authblk}

\newcommand{\email}[1]{\href{mailto:#1}{\nolinkurl{#1}}}
\author{Minh N. B\`ui}
\affil{North Carolina State University,
Department of Mathematics,
Raleigh, NC 27695-8205, USA\\
\email{mnbui@ncsu.edu}
}
\begin{document}

\title{\sffamily\huge\vskip -10mm
A Decomposition Method for Solving\\
Multicommodity Network Equilibrium
}

\date{~}

\maketitle

\vskip -6mm

\begin{abstract}
We consider the numerical aspect of the multicommodity network
equilibrium problem proposed by Rockafellar in 1995. Our method
relies on the flexible monotone operator splitting framework
recently proposed by Combettes and Eckstein.
\end{abstract}

\section{Problem formulation}
\label{sec:1}

Rockafellar proposed in \cite{Rock95} the important multicommodity
network equilibrium model (see \eqref{e:1} in
Problem~\ref{prob:1}) and studied some of its properties.
In the present paper, we devise a flexible numerical method for
solving this problem based on the asynchronous block-iterative
decomposition framework of \cite{MaPr18}.

The following notion of a network from \cite[Section~1A]{Rock84}
plays a central role in our problem.

\begin{definition}
\label{d:1}
A network consists of nonempty finite sets $\BN$ and $\BA$ ---
whose elements are called nodes and arcs, respectively --- and a
mapping $\vartheta\colon\BA\to\BN\times\BN\colon
j\mapsto(\vartheta_1(j),\vartheta_2(j))$ such that, for every
$j\in\BA$, $\vartheta_1(j)\neq\vartheta_2(j)$. We call
$\vartheta_1(j)$ and $\vartheta_2(j)$ the initial node and the
terminal node of arc $j$, respectively. In addition, we set
\begin{equation}
\label{e:ai+-}
(\forall i\in\BN)\quad
\begin{cases}
\BA^+(i)=\menge{j\in\BA}{
\text{node $i$ is the initial node of arc $j$}}\\
\BA^-(i)=\menge{j\in\BA}{
\text{node $i$ is the terminal node of arc $j$}}.
\end{cases}
\end{equation}
\end{definition}

Recall that, given a Euclidean space $\GG$ with scalar product
$\scal{\mute}{\mute}$, an operator $A\colon\GG\to 2^\GG$ is
maximally monotone if
\begin{equation}
\big(\forall(x,x^*)\in\GG\times\GG\big)\quad
(x,x^*)\in\gra A\quad\Leftrightarrow\quad
\big[\;\big(\forall(y,y^*)\in\gra A\big)\;\;
\scal{x-y}{x^*-y^*}\geq 0\;\big],
\end{equation}
where $\gra A=\menge{(x,x^*)\in\GG\times\GG}{x^*\in Ax}$
is the graph of $A$. (The reader is referred to \cite{Livre1} for
background and complements on monotone operator theory and convex
analysis.) The problem of interest is the following.

\begin{problem}
\label{prob:1}
Under consideration is a network $(\BN,\BA,\vartheta)$,
together with a nonempty finite set $\BCC$ of commodities
transiting on the network. Equip $\HH=\RR^\BCC$ with the
scalar product $((\xi_k)_{k\in\BCC},(\eta_k)_{k\in\BCC})\mapsto
\sum_{k\in\BCC}\xi_k\eta_k$ and let us introduce the spaces
\begin{equation}
\label{e:2250}
\begin{cases}
\XXX=
\menge{\boldsymbol{x}=(x_j)_{j\in\BA}}{
(\forall j\in\BA)\;\;x_j=(\xi_{j,k})_{k\in\BCC}\in\HH}\\
\VVV=\menge{\boldsymbol{v}^*=(v_i^*)_{i\in\BN}}{
(\forall i\in\BN)\;\;v_i^*=(\nu_{i,k}^*)_{k\in\BCC}\in\HH}.
\end{cases}
\end{equation}
An element $\boldsymbol{x}\in\XXX$ is called a flow on the
network, where $\xi_{j,k}$ is the flux of commodity $k$ on arc
$j$. The divergence of a flow $\boldsymbol{x}\in\XXX$ at node $i$
is
\begin{equation}
\label{e:dive}
\dive_i\boldsymbol{x}=
\sum_{j\in\BA^+(i)}x_j-\sum_{j\in\BA^-(i)}x_j.
\end{equation}
We refer to an element $\boldsymbol{v}^*\in\VVV$ as a potential on
the network, where $\nu_{i,k}^*$ is the potential of commodity $k$
at node $i$. Given $\boldsymbol{v}^*\in\VVV$ and $j\in\BA$, the
tension (or potential difference) across arc $j$ relative to the
potential $\boldsymbol{v}^*$ is
\begin{equation}
\label{e:tens}
\Delta_j\boldsymbol{v}^*=
v_{\vartheta_2(j)}^*-v_{\vartheta_1(j)}^*.
\end{equation}
For every $j\in\BA$, the flow-tension relation on arc $j$
is modeled by the sum
$Q_j+R_j$ of maximally monotone operators
$Q_j\colon\HH\to 2^\HH$ and $R_j\colon\HH\to 2^\HH$.
Further, for every $i\in\BN$, the
divergence-potential relation
at node $i$ is modeled by a maximally monotone operator
$S_i\colon\HH\to 2^\HH$. The task is to
\begin{equation}
\label{e:1}
\text{find a flow}\;\:
\overline{\boldsymbol{x}}\in\XXX\;\:
\text{and a potential}\;\:\overline{\boldsymbol{v}}^*\in
\VVV\;\:\text{such that}\;\:
\begin{cases}
(\forall j\in\BA)\;\;
\Delta_j\overline{\boldsymbol{v}}^*\in
Q_j\overline{x}_j+R_j\overline{x}_j\\
(\forall i\in\BN)\;\;\dive_i\overline{\boldsymbol{x}}\in
S_i^{-1}\overline{v}_i^*,
\end{cases}
\end{equation}
under the assumption that \eqref{e:1} has a solution.
\end{problem}

\begin{remark}
\label{r:8}
The pertinence of Problem~\ref{prob:1} is demonstrated in
\cite[Chapter~8]{Rock84} and \cite{Rock95}, where it is shown to
capture formulations arising in areas such as traffic assignment,
hydraulic networks, and price equilibrium.
\end{remark}

\section{A block-iterative decomposition method}
\label{sec:2}

{\noindent\bfseries Notation.}
Throughout, $\GG$ is a Euclidean space. Let $A\colon\GG\to 2^\GG$
be maximally monotone and let $x\in\GG$. Then, in terms of the
variable $p\in\GG$, the inclusion $x\in p+Ap$ has a unique
solution, which is denoted by $J_Ax$. The operator
$J_A\colon\GG\to\GG\colon x\mapsto J_Ax$ is called the resolvent
of $A$.

Our algorithm (see \eqref{e:alg} in Proposition~\ref{p:1}) is
derived from \cite[Algorithm~12]{MaPr18} and it thus inherits the
following attractive features from the framework of \cite{MaPr18}:
\begin{itemize}
\item
No additional assumption, such as Lipschitz continuity or
cocoercivity, is imposed on the underlying operators. 
\item
Algorithm~\eqref{e:alg} achieves full splitting in the sense that
the operators $(Q_j)_{j\in\BA}$, $(R_j)_{j\in\BA}$, and
$(S_i)_{i\in\BN}$ are activated independently via their
resolvents.
\item
Algorithm~\eqref{e:alg} is block-iterative, that is,
at iteration $n$, only blocks
$(Q_j)_{j\in\BA_n}$, $(R_j)_{j\in\BA_n}$, and $(S_i)_{i\in\BN_n}$
of operators need to be activated. To guarantee convergence
of the iterates, the mild sweeping condition
\eqref{e:2131} needs to be fulfilled.
\end{itemize}

We shall denote elements in $\XXX$ and $\VVV$ by bold letters,
e.g., $\boldsymbol{q}_n=(q_{j,n})_{j\in\BA}$ and
$\boldsymbol{s}_n^*=(s_{i,n}^*)_{i\in\BN}$.

\begin{proposition}
\label{p:1}
Consider the setting of Problem~\ref{prob:1}. Let $T\in\NN$,
let $(\BA_n)_{n\in\NN}$ be nonempty subsets of $\BA$, and
let $(\BN_n)_{n\in\NN}$ be nonempty subsets of $\BN$
such that $\BA_0=\BA$, $\BN_0=\BN$, and
\begin{equation}
\label{e:2131}
(\forall n\in\NN)\quad
\bigcup_{k=n}^{n+T}\BA_k=\BA
\quad\text{and}\quad
\bigcup_{k=n}^{n+T}\BN_k=\BN.
\end{equation}
Let $(\lambda_n)_{n\in\NN}$ be a sequence in $\left]0,2\right[$
such that $\inf_{n\in\NN}\lambda_n>0$ and
$\sup_{n\in\NN}\lambda_n<2$. Moreover,
for every $j\in\BA$ and every $i\in\BN$,
let $(x_{j,0},x_{j,0}^*,v_{i,0}^*)\in\HH^3$
and $(\gamma_j,\mu_j,\sigma_i)\in\RPP^3$. Iterate
\begin{equation}
\label{e:alg}
\begin{array}{l}
\text{for}\;n=0,1,\ldots\\
\left\lfloor
\begin{array}{l}
\text{for every}\;j\in\BA_n\\
\left\lfloor
\begin{array}{l}
l_{j,n}^*=x_{j,n}^*-\Delta_j\boldsymbol{v}_n^*\\
q_{j,n}=J_{\gamma_jQ_j}(x_{j,n}-\gamma_jl_{j,n}^*)\\
q_{j,n}^*=\gamma_j^{-1}(x_{j,n}-q_{j,n})-l_{j,n}^*\\
r_{j,n}=J_{\mu_jR_j}(x_{j,n}+\mu_jx_{j,n}^*)\\
r_{j,n}^*=x_{j,n}^*+\mu_j^{-1}(x_{j,n}-r_{j,n})
\end{array}
\right.
\\
\text{for every}\;j\in\BA\smallsetminus\BA_n\\
\left\lfloor
\begin{array}{l}
q_{j,n}=q_{j,n-1};\;
q_{j,n}^*=q_{j,n-1}^*;\;
r_{j,n}=r_{j,n-1};\;
r_{j,n}^*=r_{j,n-1}^*
\end{array}
\right.
\\
\text{for every}\;i\in\BN_n\\
\left\lfloor
\begin{array}{l}
l_{i,n}=
\dive_i\boldsymbol{x}_n\\
s_{i,n}=J_{\sigma_iS_i}(l_{i,n}+\sigma_iv_{i,n}^*)\\
s_{i,n}^*=v_{i,n}^*+
\sigma_i^{-1}(l_{i,n}-s_{i,n})\\
t_{i,n}=s_{i,n}-\dive_i\boldsymbol{q}_n
\end{array}
\right.
\\
\text{for every}\;i\in\BN\smallsetminus\BN_n\\
\left\lfloor
\begin{array}{l}
s_{i,n}=s_{i,n-1};\;
s_{i,n}^*=s_{i,n-1}^*\\
t_{i,n}=s_{i,n}-\dive_i\boldsymbol{q}_n
\end{array}
\right.
\\
\text{for every}\;j\in\BA\\
\left\lfloor
\begin{array}{l}
t_{j,n}^*=q_{j,n}^*+r_{j,n}^*-\Delta_j\boldsymbol{s}_n^*\\
u_{j,n}=r_{j,n}-q_{j,n}
\end{array}
\right.
\\
\tau_n=\sum_{j\in\BA}\big(\|t_{j,n}^*\|^2+
\|u_{j,n}\|^2\big)+\sum_{i\in\BN}\|t_{i,n}\|^2\\
\text{if}\;\tau_n>0\\
\left\lfloor
\begin{array}{l}
\begin{aligned}
\pi_n&=\textstyle\sum_{j\in\BA}\big(
\scal{x_{j,n}}{t_{j,n}^*}-
\scal{q_{j,n}}{q_{j,n}^*}+
\scal{u_{j,n}}{x_{j,n}^*}-
\scal{r_{j,n}}{r_{j,n}^*}\big)
\\
&\textstyle\quad\;+\sum_{i\in\BN}\big(
\scal{t_{i,n}}{v_{i,n}^*}-
\scal{s_{i,n}}{s_{i,n}^*}\big)
\end{aligned}
\\
\theta_n=\lambda_n\max\{\pi_n,0\}/\tau_n
\end{array}
\right.
\\
\text{else}\\
\left\lfloor
\begin{array}{l}
\theta_n=0
\end{array}
\right.
\\
\text{for every}\;j\in\BA\\
\left\lfloor
\begin{array}{l}
x_{j,n+1}=x_{j,n}-\theta_nt_{j,n}^*\\
x_{j,n+1}^*=x_{j,n+1}^*-\theta_nu_{j,n}
\end{array}
\right.
\\
\text{for every}\;i\in\BN\\
\left\lfloor
\begin{array}{l}
v_{i,n+1}^*=v_{i,n}^*-\theta_nt_{i,n}.
\end{array}
\right.\\[2mm]
\end{array}
\right.
\end{array}
\end{equation}
Then $((x_{j,n})_{j\in\BA},(v_{i,n}^*)_{i\in\BN})_{n\in\NN}$
converges to a solution to \eqref{e:1}.
\end{proposition}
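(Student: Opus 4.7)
My strategy is to recast Problem~\ref{prob:1} as a structured primal-dual monotone inclusion on a product space, identify Algorithm~\eqref{e:alg} as the specialization of \cite[Algorithm~12]{MaPr18} to this inclusion, and then invoke the convergence theorem in \cite{MaPr18}.

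\emph{Step 1 (reformulation).} Let $\boldsymbol{\Delta}\colon\VVV\to\XXX\colon\boldsymbol{v}^*\mapsto(\Delta_j\boldsymbol{v}^*)_{j\in\BA}$ and $\boldsymbol{D}\colon\XXX\to\VVV\colon\boldsymbol{x}\mapsto(\dive_i\boldsymbol{x})_{i\in\BN}$. By swapping the order of summation and using \eqref{e:ai+-}--\eqref{e:tens}, one checks that $\boldsymbol{D}^*=-\boldsymbol{\Delta}$. Set $\boldsymbol{Q}\colon\boldsymbol{x}\mapsto\prod_{j\in\BA}Q_jx_j$, $\boldsymbol{R}\colon\boldsymbol{x}\mapsto\prod_{j\in\BA}R_jx_j$, and $\boldsymbol{S}\colon\boldsymbol{l}\mapsto\prod_{i\in\BN}S_il_i$, which are all maximally monotone. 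Then \eqref{e:1} amounts to finding $(\overline{\boldsymbol{x}},\overline{\boldsymbol{v}}^*)$ with $\boldsymbol{\Delta}\overline{\boldsymbol{v}}^*\in\boldsymbol{Q}\overline{\boldsymbol{x}}+\boldsymbol{R}\overline{\boldsymbol{x}}$ and $\boldsymbol{D}\overline{\boldsymbol{x}}\in\boldsymbol{S}^{-1}\overline{\boldsymbol{v}}^*$. Introducing a splitting multiplier $\overline{\boldsymbol{x}}^*\in\XXX$ that encodes $\overline{\boldsymbol{x}}^*\in\boldsymbol{R}\overline{\boldsymbol{x}}$ and $\boldsymbol{\Delta}\overline{\boldsymbol{v}}^*-\overline{\boldsymbol{x}}^*\in\boldsymbol{Q}\overline{\boldsymbol{x}}$ produces a Kuhn-Tucker set of exactly the form targeted by \cite{MaPr18}.

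\emph{Step 2 (algorithm identification).} Unwinding the defining inclusion of each resolvent in \eqref{e:alg} yields the graph memberships
\begin{equation*}
(q_{j,n},q_{j,n}^*)\in\gra Q_j,\quad
(r_{j,n},r_{j,n}^*)\in\gra R_j,\quad
(s_{i,n},s_{i,n}^*)\in\gra S_i,
\end{equation*}
so each outer iteration produces, for every activated index, a point in the graph of the corresponding operator. I will then verify that $(t_{j,n}^*, u_{j,n}, t_{i,n})$ are the residuals of the coupling equations $\boldsymbol{\Delta}\boldsymbol{v}_n^*=\boldsymbol{q}_n^*+\boldsymbol{r}_n^*$, $\boldsymbol{q}_n=\boldsymbol{r}_n$, and $\boldsymbol{s}_n=\boldsymbol{D}\boldsymbol{q}_n$, that $\pi_n$ is the value at the current iterate of the affine functional separating the Kuhn-Tucker set from the current point (this being the Combettes-Eckstein hyperplane transport), and that $\tau_n$ is the squared norm of its normal. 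The over-relaxed projection step $\theta_n=\lambda_n\max\{\pi_n,0\}/\tau_n$ and the closing updates of $\boldsymbol{x}_n$, $\boldsymbol{x}_n^*$, $\boldsymbol{v}_n^*$ then match verbatim the projection prescribed in \cite[Algorithm~12]{MaPr18}. The initialization $\BA_0=\BA$, $\BN_0=\BN$ handles the otherwise missing indices in the first iteration of the \textquotedblleft carry-over\textquotedblright\ branches.

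\emph{Step 3 (convergence).} It remains to check the hypotheses of the convergence result of \cite{MaPr18}: the sweeping condition \eqref{e:2131} is precisely the admissibility assumption on the index blocks; the parameters $(\gamma_j,\mu_j,\sigma_i)$ are strictly positive; the relaxation sequence $(\lambda_n)_{n\in\NN}$ lies in a compact subinterval of $\zeroun[0,2[$ by hypothesis; and the Kuhn-Tucker set is nonempty since \eqref{e:1} has a solution. The main theorem of \cite{MaPr18} then yields convergence of $((\boldsymbol{x}_n,\boldsymbol{x}_n^*,\boldsymbol{v}_n^*))_{n\in\NN}$ to a Kuhn-Tucker point, and projecting onto the $(\boldsymbol{x},\boldsymbol{v}^*)$ components delivers the claimed convergence. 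The main obstacle will be the bookkeeping in Step~2, namely matching the abstract product-space resolvents, residuals, and separating hyperplane of \cite[Algorithm~12]{MaPr18} with the concrete coordinate formulas of~\eqref{e:alg}; once this identification is in place, the rest is essentially a direct citation.
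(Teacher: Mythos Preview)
Your proposal is correct and follows essentially the same route as the paper: reformulate \eqref{e:1} by introducing a splitting variable $\overline{\boldsymbol{x}}^*\in\XXX$ so that the system becomes an instance of \cite[Problem~1]{MaPr18}, verify that \eqref{e:alg} is the corresponding specialization of \cite[Algorithm~12]{MaPr18} (the paper does this via the node--arc incidence coefficients $\varepsilon_{i,j}$ while you use the adjoint relation $\boldsymbol{D}^*=-\boldsymbol{\Delta}$, which is the same computation), and then invoke \cite[Theorem~13]{MaPr18}. The only remaining work, as you note, is the bookkeeping in Step~2, and the paper dispatches it exactly along the lines you outline.
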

\begin{proof}
Let us consider the multivariate monotone inclusion problem
\begin{multline}
\label{e:20}
\text{find}\;\:
\overline{\boldsymbol{x}}\in\XXX,\;\:
\overline{\boldsymbol{x}}^*\in\XXX,\;\:
\text{and}\;\:\overline{\boldsymbol{v}}^*\in\VVV
\;\:\text{such that}\;\:
\begin{cases}
(\forall j\in\BA)\;\;\Delta_j\overline{\boldsymbol{v}}^*
-\overline{x}_j^*\in Q_j\overline{x}_j\;\:\text{and}\;\:
\overline{x}_j\in R_j^{-1}\overline{x}_j^*\\
(\forall i\in\BN)\;\;
\dive_i\overline{\boldsymbol{x}}\in S_i^{-1}\overline{v}_i^*.
\end{cases}
\end{multline}
Then
\begin{align}
\label{e:1940}
&(\forall\overline{\boldsymbol{x}}\in\XXX)
(\forall\overline{\boldsymbol{v}}^*\in\VVV)\quad
(\overline{\boldsymbol{x}},
\overline{\boldsymbol{v}}^*)\;\:\text{solves \eqref{e:1}}
\nonumber\\
&\hskip 40mm
\Leftrightarrow(\exi\overline{\boldsymbol{x}}^*\in\XXX)\;\;
\begin{cases}
(\forall j\in\BA)\;\;\Delta_j\overline{\boldsymbol{v}}^*
\in Q_j\overline{x}_j+\overline{x}_j^*\;\:\text{and}\;\:
\overline{x}_j^*\in R_j\overline{x}_j\\
(\forall i\in\BN)\;\;
\dive_i\overline{\boldsymbol{x}}\in S_i^{-1}\overline{v}_i^*
\end{cases}
\nonumber\\
&\hskip 40mm
\Leftrightarrow(\exi\overline{\boldsymbol{x}}^*\in\XXX)\;\;
(\overline{\boldsymbol{x}},\overline{\boldsymbol{x}}^*,
\overline{\boldsymbol{v}}^*)\;\:\text{solves \eqref{e:20}}.
\end{align}
Therefore, since \eqref{e:1} has a solution, so does \eqref{e:20}.
Next, define
\begin{equation}
\label{e:na}
(\forall i\in\BN)(\forall j\in\BA)\quad
\varepsilon_{i,j}=
\begin{cases}
1,&\text{if node $i$ is the initial node of arc $j$};\\
{-}1,&\text{if node $i$ is the terminal node of arc $j$};\\
0,&\text{otherwise}.
\end{cases}
\end{equation}
It results from \eqref{e:dive} and \eqref{e:ai+-} that
\begin{equation}
\label{e:dive2}
(\forall\boldsymbol{x}\in\XXX)(\forall i\in\BN)\quad
\dive_i\boldsymbol{x}=\sum_{j\in\BA}\varepsilon_{i,j}x_j,
\end{equation}
and from \eqref{e:tens} that
\begin{equation}
\label{e:tens2}
(\forall\boldsymbol{v}^*\in\VVV)(\forall j\in\BA)\quad
\Delta_j\boldsymbol{v}^*=
{-}\sum_{i\in\BN}\varepsilon_{i,j}v_i^*.
\end{equation}
We now verify that \eqref{e:20} is a special case of
\cite[Problem~1]{MaPr18} with the setting
\begin{equation}
\label{e:7743}
I=\BA,\quad K=\BA\cup\BN,\quad\text{and}\quad
(\forall j\in I)(\forall k\in K)\;\;
\begin{cases}
\HH_j=\GG_k=\HH\\
A_j=Q_j\\
B_k=
\begin{cases}
R_k,&\text{if}\;\:k\in\BA;\\
S_k,&\text{if}\;\:k\in\BN
\end{cases}
\\
z_j^*=r_k=0\\
L_{k,j}=
\begin{cases}
\Id,&\text{if}\;\:k=j;\\
0,&\text{if}\;\:k\in\BA\;\:\text{and}\;\:k\neq j;\\
\varepsilon_{k,j}\Id,&\text{if}\;\:k\in\BN.
\end{cases}
\end{cases}
\end{equation}
We deduce from \eqref{e:dive2} that
\begin{align}
(\forall\boldsymbol{x}\in\XXX)
(\forall k\in K)\quad
\sum_{j\in I}L_{k,j}x_j
&=
\begin{cases}
x_k,&\text{if}\;\:k\in\BA;\\
\sum_{j\in I}\varepsilon_{k,j}x_j,&\text{if}\;\:k\in\BN
\end{cases}
\nonumber\\
&=
\begin{cases}
x_k,&\text{if}\;\:k\in\BA;\\
\dive_k\boldsymbol{x},&\text{if}\;\:k\in\BN,
\end{cases}
\end{align}
and from \eqref{e:tens2} that
\begin{equation}
(\forall\boldsymbol{x}^*\in\XXX)
(\forall\boldsymbol{v}^*\in\VVV)
(\forall j\in I)\quad
\sum_{k\in\BA}L_{k,j}^*x_k^*+
\sum_{k\in\BN}L_{k,j}^*v_k^*
=x_j^*+\sum_{k\in\BN}\varepsilon_{k,j}v_k^*
=x_j^*-\Delta_j\boldsymbol{v}^*.
\end{equation}
Hence, in the setting of \eqref{e:7743},
\eqref{e:20} is an instantiation of
\cite[Problem~1]{MaPr18} and \eqref{e:alg} is a realization
of \cite[Algorithm~12]{MaPr18}, where
$(\forall n\in\NN)$ $I_n=\BA_n$ and $K_n=\BA_n\cup\BN_n$.
Thus, upon letting
\begin{equation}
(\forall n\in\NN)\quad
\boldsymbol{x}_n=(x_{j,n})_{j\in\BA},\quad
\boldsymbol{x}_n^*=(x_{j,n}^*)_{j\in\BA},
\quad\text{and}\quad
\boldsymbol{v}_n^*=(v_{i,n}^*)_{i\in\BN},
\end{equation}
we infer from \cite[Theorem~13]{MaPr18} that
$(\boldsymbol{x}_n,\boldsymbol{x}_n^*,
\boldsymbol{v}_n^*)_{n\in\NN}$ converges to a solution
$(\overline{\boldsymbol{x}},
\overline{\boldsymbol{x}}^*,
\overline{\boldsymbol{v}}^*)$ to \eqref{e:20}.
Consequently, \eqref{e:1940} asserts that
$(\overline{\boldsymbol{x}},\overline{\boldsymbol{v}}^*)$
solves \eqref{e:1}.
\end{proof}

\newpage

\begin{remark}
\label{r:2}
Some comments are in order.
\begin{enumerate}
\item
One might be tempting to consider \eqref{e:1} as a special case
of \cite[Problem~1]{MaPr18} with the setting
\begin{equation}
\label{e:1516}
I=\BA,\quad K=\BN,\quad\text{and}\quad
(\forall j\in I)(\forall k\in K)\;\;
\begin{cases}
\HH_j=\GG_k=\HH\\
A_j=Q_j+R_j\\
B_k=S_k\\
z_j^*=r_k=0\\
L_{k,j}=\varepsilon_{k,j}\Id,
\end{cases}
\end{equation}
where $(\varepsilon_{i,j})_{i\in\BN,j\in\BA}$ are defined in
\eqref{e:na}, and then specialize \cite[Algorithm~12]{MaPr18}
to \eqref{e:1516}. However, this approach necessitates
the computation of the resolvents of the operators
$(Q_j+R_j)_{j\in\BA}$,
which cannot be expressed in terms of the resolvents
of $(Q_j)_{j\in\BA}$ and $(R_j)_{j\in\BA}$ in general
(see Examples~\ref{ex:27} and \ref{ex:271}).
\item
Algorithm~\eqref{e:alg} of Proposition~\ref{p:1} requires
to evaluate the resolvents of the operators
$(Q_j)_{j\in\BA}$, $(R_j)_{j\in\BA}$, and $(S_i)_{i\in\BN}$.
Illustrations of such calculations in some special cases of
Problem~\ref{prob:1} encountered in the literature
are provided in Examples~\ref{ex:27}, \ref{ex:271} and
\ref{ex:bpr}--\ref{ex:power}.
\item
Alternate algorithms \cite{Siop13,Siop15,Pesq15}
can also be used to solve \eqref{e:20} and, in turn, \eqref{e:1}.
Nevertheless, there are certain restrictions on the resulting
algorithms. For example, the method of \cite{Siop13} must activate
all the operators $(Q_j)_{j\in\BA}$, $(R_j)_{j\in\BA}$, and
$(S_i)_{i\in\BN}$ at every iteration, while the
frameworks of \cite{Siop15,Pesq15} do not allow for deterministic
selections of the blocks $(Q_j)_{j\in\BA_n}$, $(R_j)_{j\in\BA_n}$,
and $(S_i)_{i\in\BN_n}$. Finally, the algorithm resulted from
\cite{Siop15} involves the inversion of a linear operator acting on
$\RR^{MN}$, where $N=\card\BCC$ and
$M=2\card\BA+\card\BN$, which may not be favorable in large-scale
problems, e.g., \cite{Flor76}.
\end{enumerate}
\end{remark}

{\noindent\bfseries Notation.}
Before proceeding further,
let us recall some basic notion of convex analysis
(see \cite{Livre1} for details).
Let $\varphi\colon\GG\to\RX$ be proper, lower
semicontinuous, and convex. The subdifferential of $\varphi$ is
the maximally monotone operator $\partial\varphi\colon\GG\to
2^\GG\colon x\mapsto\menge{x^*\in\GG}{(\forall y\in\GG)\;\;
\scal{y-x}{x^*}+\varphi(x)\leq\varphi(y)}$. For every $x\in\GG$,
the unique minimizer of $\varphi+(1/2)\|\mute-x\|^2$
is denoted by $\prox_\varphi x$. Let $C$ be a nonempty closed
convex subset of $\GG$. The indicator function of $C$ is the
proper lower semicontinuous convex function
\begin{equation}
\iota_C\colon\GG\to\RPX\colon x\mapsto
\begin{cases}
0,&\text{if}\;\:x\in C;\\
\pinf,&\text{otherwise},
\end{cases}
\end{equation}
the normal cone operator of $C$ is $N_C=\partial\iota_C$,
and the projector onto $C$ is $\proj_C=\prox_{\iota_C}$.

\begin{example}[Separable multicommodity flows]
\label{ex:27}
Consider the setting of Problem~\ref{prob:1} and
suppose, in addition, that the following are satisfied:
\begin{enumerate}[label={\rm[\alph*]}]
\item
For every $j\in\BA$,
$\fc_j\colon\RR\to 2^\RR$ is maximally monotone,
$C_j$ is a nonempty closed convex subset of $\HH$, and
\begin{equation}
\label{e:3237}
Q_j\colon\HH\to 2^\HH\colon x_j=(\xi_{j,k})_{k\in\BCC}\mapsto
\Bigg(\fc_j\bigg(\sum_{k\in\BCC}\xi_{j,k}\bigg)\Bigg)_{k\in\BCC}
\quad\text{and}\quad
R_j=N_{C_j}.
\end{equation}
\item
For every $i\in\BN$,
$s_i\in\HH$ and
\begin{equation}
\label{e:3238}
S_i^{-1}\colon\HH\to 2^\HH\colon v_i^*\mapsto\{s_i\}.
\end{equation}
\end{enumerate}
Then \eqref{e:1} reduces to the separable multicommodity flow
problem; see, e.g., \cite[Section~8.3]{Bert98} and the references
listed in \cite[Section~8.9]{Bert98}.
Take $j\in\BA$, $i\in\BN$, and $\gamma\in\RPP$. We have
$J_{\gamma R_j}=\proj_{C_j}$ and $J_{\gamma S_i}=s_i$.
To compute $J_{\gamma Q_j}$, define
$L\colon\HH\to\RR\colon (\xi_k)_{k\in\BCC}\mapsto
\sum_{k\in\BCC}\xi_k$ and set $N=\card\BCC$.
Then $L^*\colon\RR\to\HH\colon\xi\mapsto(\xi)_{k\in\BCC}$
and, therefore, $L\circ L^*=N\Id$. At the same time,
by \eqref{e:3237}, $Q_j=L^*\circ\fc_j\circ L_j$.
Thus, we derive from \cite[Proposition~23.25(iii)]{Livre1} that
\begin{multline}
\big(\forall x_j=(\xi_{j,k})_{k\in\BCC}\in\HH\big)\quad
J_{\gamma Q_j}x_j
=x_j+\frac{1}{N}\big(J_{N\gamma\fc_j}(Lx_j)-Lx_j\big)_{k\in\BCC}
=(\xi_{j,k}+\eta)_{k\in\BCC},\\
\text{where}\;\:\eta=
\Bigg(J_{N\gamma\fc_j}\Bigg(\sum_{k\in\BCC}\xi_{j,k}\Bigg)-
\sum_{k\in\BCC}\xi_{j,k}\Bigg)\bigg/N.
\end{multline}
\end{example}

\begin{example}
\label{ex:271}
The separable multicommodity flow problem with arc
capacity constraints (see, e.g., \cite[Section~8.3]{Bert98})
is an instantiation of Example~\ref{ex:27}
with, for every $j\in\BA$,
$\fc_j=\partial(\phi_j+\iota_{\Omega_j})$,
where $\phi_j\colon\RR\to\RX$ is a proper lower semicontinuous
convex function and $\Omega_j$ is a nonempty closed interval
in $\RR$ such that $\Omega_j\cap\dom\phi_j\neq\emp$.
In this setting, it follows from
\cite[Example~23.3 and Proposition~24.47]{Livre1} that
\begin{equation}
(\forall j\in\BA)\big(\forall\gamma\in\RPP\big)\quad
J_{\gamma\fc_j}
=\prox_{\gamma(\phi_j+\iota_{\Omega_j})}
=\proj_{\Omega_j}\circ\prox_{\gamma\phi_j}.
\end{equation}
\end{example}

\begin{remark}
\label{r:1}
Consider the standard traffic assignment problem, that is,
the special case of Example~\ref{ex:27} where
$(\forall j\in\BA)$ $C_j=\RP^\BCC$.
\begin{enumerate}
\item
In \cite[Example~4.4]{Sico10},
this problem was solved by an application of the
forward-backward method \cite[Theorem~2.8]{Sico10},
where it is further assumed that, for
every $j\in\BA$, $\dom\fc_j=\RR$ and $\fc_j$ is Lipschitzian.
However, some common operators found in the literature of traffic
assignment \cite{Bran76} do not fulfill this requirement; their
resolvents are provided in Examples~\ref{ex:bpr}--\ref{ex:power}.
\item
The method of \cite{Fuku96}, which is an application of the
Douglas--Rachford algorithm \cite{Lion79},
requires to compute the projectors onto polyhedral sets of the form
\begin{equation}
\label{e:4419}
\Menge{(\xi_j)_{j\in\BA}\in\RP^\BA}{(\forall i\in\BN)\;\;
\sum_{j\in\BA}\varepsilon_{i,j}\xi_j=\delta_i},\quad
\end{equation}
\end{enumerate}
where $(\varepsilon_{i,j})_{i\in\BN,j\in\BA}$ are defined in
\eqref{e:na}. This results in solving a subproblem
at every iteration because there is no closed-form
expression for such projectors.
\end{remark}

\begin{example}[Bureau of Public Roads capacity operator]
\label{ex:bpr}
Let $(\alpha,\varrho,\theta,p)\in\RPP^4$ and define
\begin{equation}
\fc\colon\RR\to\RR\colon\xi\mapsto
\begin{cases}
\theta\bigg(1+
\alpha\bigg(\dfrac{\xi}{\varrho}\bigg)^{\!p}\bigg),
&\text{if}\;\:\xi\geq 0;\\
\theta,&\text{if}\;\:\xi<0.
\end{cases}
\end{equation}
In addition, let $\gamma\in\RPP$ and $\xi\in\RR$.
Then the following hold:
\begin{enumerate}
\item
Suppose that $\xi\geq\gamma \theta$. Then,
in terms of the variable $s\in\RR$, the equation
\begin{equation}
\frac{\alpha\gamma \theta}{\varrho^p}s^p+s+\gamma\theta-\xi=0
\end{equation}
has a unique solution $\overline{s}$ and
$J_{\gamma\fc}\xi=\overline{s}$.
\item
Suppose that $\xi<\gamma\theta$.
Then $J_{\gamma\fc}\xi=\xi-\gamma\theta$.
\end{enumerate}
\end{example}

\begin{example}[Logarithmic capacity operator]
\label{ex:loga}
Let $\omega\in\RPP$, let $\theta\in\RP$, and define
\begin{equation}
\label{e:loga}
\fc\colon\RR\to 2^\RR\colon\xi\mapsto
\begin{cases}
\bigg\{\theta+\ln\dfrac{\omega}{\omega-\xi}\bigg\},
&\text{if}\;\:\xi<\omega;\\
\emp,&\text{if}\;\:\xi\geq\omega.
\end{cases}
\end{equation}
Then
\begin{equation}
\big(\forall\gamma\in\RPP\big)(\forall\xi\in\RR)\quad
J_{\gamma\fc}\xi=\omega-\gamma\EuScript{W}\big(
\omega\gamma^{-1}\exp(\theta-\xi/\gamma+\omega/\gamma)\big),
\end{equation}
where $\EuScript{W}$ is the Lambert W-function, that is, the
inverse of $\left[{-}1,\pinf\right[\to
\left[{-}1/e,\pinf\right[\colon\xi\mapsto\xi\exp(\xi)$.
\end{example}

\begin{example}[Traffic Research Corporation capacity operator]
\label{ex:trc}
Let $(\alpha,\beta,\delta,\omega)\in\RPP^4$ and
define
\begin{equation}
\label{e:trc}
\fc\colon\RR\to\RR\colon\xi\mapsto
\delta+\alpha(\xi-\omega)+\sqrt{\alpha^2(\xi-\omega)^2+\beta}.
\end{equation}
Then
\begin{equation}
\big(\forall\gamma\in\RPP\big)(\forall\xi\in\RR)\quad
J_{\gamma\fc}\xi=
\frac{{-}\sqrt{\gamma^2\alpha^2(\xi-\gamma\delta-\omega)^2
+(2\gamma\alpha+1)\gamma^2\beta}+\gamma\alpha(\xi-\gamma\delta+\omega)
+\xi-\gamma\delta}{2\gamma\alpha+1}.
\end{equation}
\end{example}

\begin{example}
\label{ex:power}
Let $\alpha\in\left]1,\pinf\right[$,
let $\theta\in\RPP$, let $p\in\RPP$, and define
\begin{equation}
\label{e:power}
\fc\colon\RR\to\RR\colon\xi\mapsto\theta\alpha^{p\xi}.
\end{equation}
Then
\begin{equation}
\big(\forall\gamma\in\RPP\big)(\forall\xi\in\RR)\quad
J_{\gamma\fc}\xi=\xi-\frac{\EuScript{W}\big(
\gamma\theta\alpha^{p\xi}p\ln\alpha\big)}{p\ln\alpha}.
\end{equation}
\end{example}

\begin{ack}
This work is a part of the author's Ph.D. dissertation and it was
supported by the National Science Foundation under grant
CCF-1715671. The author thanks his Ph.D. advisor P. L. Combettes
for his guidance during this work.
\end{ack}

\end{document}